\newtheorem{thm}{Theorem}[section]
\newtheorem{lem}[thm]{Lemma}
\newtheorem{prop}[thm]{Proposition}
\newtheorem{cor}[thm]{Corollary}
\newcommand{\e}{\varepsilon}
\begin{document}

\title{Absence of site percolation at criticality in $\mathbb{Z}^2 \times \{0,1\}$}
\author{Michael Damron \thanks{The research of M. D. is supported by an NSF postdoctoral fellowship and NSF grants DMS-0901534 and DMS-1007626.} \\ \small{Princeton University} \and Charles M. Newman \thanks{The research of C. M. N. is supported by NSF grants OISE-0730136, DMS-1007524 and DMS-1007626.} \\ \small{Courant Institute, NYU}\and Vladas Sidoravicius \thanks{The research of V. S. is supported by CNPq grants PQ 308787/2011-0 and 484801/2011-2.}\\ \small{IMPA}}

\date{\today}

\maketitle

\abstract{In this note we consider site percolation on a two dimensional sandwich of thickness two, the graph $\mathbb{Z}^2 \times \{0,1\}$. We prove that there is no percolation at the critical point. The same arguments are valid for a sandwich of thickness three with periodic boundary conditions. It remains an open problem to extend this result to other sandwiches.}

\section{Introduction}

Motivated by the well-known open problem to prove that in Bernoulli percolation -- bond or site -- in $\mathbb{Z}^d$, there is no percolation at the critical point for all $d$ (proven so far for $d\leq 2$ and $d \geq 19$), we consider the case of percolation on 2-dimensional slabs, ``sandwiches,'' of the form $\mathbb{Z}^2 \times \{0,\ldots, k-1\}$. We point out in this note that in at least one case, site percolation with $k=2$, this issue can be resolved.

The set of vertices is $S=\mathbb{Z}^2 \times \{0,1\}$ and two vertices are neighbors if they are (three-dimensional) Euclidean distance one from each other. To show that there is no percolation at the critical point, the main tool is the following observation, which we call the \emph{thin sandwich property}. If two paths in $S$ have the property that their projections onto $\mathbb{Z}^2$ intersect, then they must connect in $S$. The thin sandwich property is no longer valid for sandwiches of width bigger than 2 nor for bond percolation on the sandwich of width 2. However, there are other graphs for which it holds. For instance, take a sandwich of thickness 3 with periodic boundary conditions; that is, the set $\mathbb{Z}^2 \times \{0, 1, 2\}$ and declare that any two sites of the form $(x,y,0)$ and $(x,y,2)$ are also neighbors. Then the thin sandwich property holds and the conclusions of the present paper are valid.

\subsection{Main result and notation}

A configuration $\omega$ is in the set $\{0,1\}^S$ and for each $v \in S$, $\omega(v)$ represents the status of the vertex $v$: open if $\omega(v)=1$ and closed otherwise. An open path in $S$ from $v$ to $w$ is a sequence of vertices $(v_0, \ldots, v_n)$ such that $v_0=v$, $v_n=w$, $\|v_i-v_{i+1}\|_1=1$ for all $i = 0, \ldots, n-1$, and all $v_i$'s are open. We say $v$ is connected to infinity $(v \to \infty)$ if $v$ is part of an infinite open path. If this occurs for some $v$, we say percolation occurs. Let $\mathbb{P}_p$ be the product measure on $\{0,1\}^S$ with parameter $p$:
\[
\mathbb{P}_p(\omega(v)=1) = p = 1-\mathbb{P}_p(\omega(v)=0)\ .
\]
There is a critical value $p_c = \sup\{p:\mathbb{P}_p(\vec 0 \to \infty) = 0\}$, where $\vec 0 = (0,0,0)$. 

\begin{thm}\label{thm: mainthm1}
There is no percolation at the critical point:
\[
\theta(p_c) = 0\ ,
\]
where $\theta(p) = \mathbb{P}_p(\vec 0 \to \infty)$.
\end{thm}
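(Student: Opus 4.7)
Argue by contradiction: suppose $\theta(p_c) > 0$, and derive a Harris--Zhang-style planar contradiction, with the thin sandwich property playing the role of planar duality. Because $S = \mathbb{Z}^2 \times \{0,1\}$ is a vertex-transitive, amenable graph of bounded degree, the Burton--Keane argument applies and yields a.s.\ uniqueness of the infinite open cluster at every $p$; together with $\theta(p_c) > 0$, at $p = p_c$ there is a.s.\ a unique infinite open cluster $\mathcal{C}$. To set up a planar dual, call $x \in \mathbb{Z}^2$ \emph{doubly closed} when both $(x,0)$ and $(x,1)$ are closed, so that the doubly closed sites form an i.i.d.\ Bernoulli field on $\mathbb{Z}^2$ of density $(1-p_c)^2$. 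The decisive consequence of the thin sandwich property is that a $*$-connected (king-move) circuit of doubly closed sites in $\mathbb{Z}^2$ is a genuine wall in $S$: every open $S$-path projects to a $4$-connected walk in $\mathbb{Z}^2$ through covered sites, and such a walk cannot cross an $8$-connected circuit of doubly closed sites, so no open $S$-path can cross the lifted wall. Thus the doubly-closed $*$-process is a matching planar dual of the projected primal.

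Next, using uniqueness of $\mathcal{C}$, the FKG inequality, and translation invariance, Zhang's argument gives that for every $\varepsilon > 0$ and all sufficiently large $n$, with probability at least $1 - \varepsilon$ each of the four sides of $B_n \times \{0,1\}$ contains a vertex of $\mathcal{C}$ joined to infinity by an open $S$-path staying in the exterior of the box. Combined with the thin-sandwich duality, this forces horizontal (and vertical) $S$-crossings of large rectangles to have probability close to $1$ at $p = p_c$. A planar finite-size criterion then closes the loop: for two-dimensional FKG measures, box-crossing probabilities sufficiently close to $1$ at some scale force strict supercriticality, via sharpness of the phase transition (Aizenman--Barsky--Menshikov, Duminil-Copin--Tassion) and the square-root trick, with the role of the planar dual crossing played by a $*$-connected crossing of doubly closed sites. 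This contradicts $p = p_c$.

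The main obstacle is the last step. Unlike $\mathbb{Z}^2$ bond percolation at $p = 1/2$, the slab $S$ has no intrinsic self-dual value: the dual parameter $(1-p_c)^2$ need not be critical, and Harris's classical planar argument cannot be mirrored verbatim. The thin sandwich property is precisely what rescues the planar/duality machinery; by turning $*$-connected circuits of doubly closed sites into genuine walls in $S$, it lets the standard two-dimensional finite-size criterion be deployed on the quasi-three-dimensional graph $S$. Executing this carefully---verifying the matching relation for the lifted walls, setting up the duality dichotomy ``primal crosses $R_n$ horizontally OR dual $*$-crosses $R_n$ vertically,'' and combining with sharpness---is where the bulk of the technical work lies.
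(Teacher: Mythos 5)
Your high-level scaffolding (contradiction from $\theta(p_c)>0$, uniqueness of the infinite cluster, the Grimmett/Zhang argument giving square-crossing probabilities $\to 1$, then a finite-size criterion forcing $\theta(p')>0$ for some $p'<p_c$) matches the paper's outline, but the central mechanism you propose does not work. The dichotomy ``open $S$-path crosses $R_n$ horizontally OR a $*$-connected circuit of doubly-closed sites crosses $R_n$ vertically'' is \emph{false}. Doubly-closed $*$-circuits are indeed one-way walls, but their absence does not produce an open $S$-crossing, because a $4$-connected $\mathbb{Z}^2$-walk through not-doubly-closed sites need not lift to an open path in $S$: two consecutive singly-open columns at opposite levels, say $(x,0,0)$ open with $(x,0,1)$ closed, and $(x+1,0,0)$ closed with $(x+1,0,1)$ open, have no open $S$-adjacency. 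Concretely, in the rectangle $[0,3]\times[-1,1]$ make rows $y=\pm 1$ doubly closed and row $y=0$ alternate as above; then there is neither an open horizontal $S$-crossing nor a doubly-closed $*$-connected vertical crossing. So the ``matching relation'' you say needs verifying is simply untrue, and the planar finite-size machinery cannot be imported this way. This is exactly the obstruction the paper identifies at the start of Section~3 (``it is not enough for their projections onto $\mathbb{Z}^2$ to connect'').

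The paper circumvents this without any dual process. After Lemma~\ref{lem: crossing} (the square-crossing step you correctly invoke), it proves an RSW estimate directly in $S$ by (i) decomposing over left-most crossing ribbons (Appendix), (ii) explicitly paying a constant probability factor of order $p^{O(1)}$ to ``hook'' projected intersections into genuine $S$-connections via short deterministic paths, and (iii) bootstrapping the weak bound $\mathbb{P}_p(E(2n,n))\geq c_1$ (Proposition~\ref{prop: weakRSW}) up to $\mathbb{P}_p(E(2n,n))\to 1$ (Proposition~\ref{prop: strongRSW}) by repeating the hooking independently in $\Theta(\log n)$ dyadic annuli so the constant loss is recovered. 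Only then does the elementary multi-scale criterion close the loop, and that criterion needs nothing beyond FKG, independence, and the thin sandwich property---your appeal to sharpness (Aizenman--Barsky--Menshikov, Duminil-Copin--Tassion) is a red herring: sharpness gives exponential decay \emph{below} $p_c$, whereas what is required here is the converse implication ``crossings near $1$ at one scale $\Rightarrow$ percolation,'' which is an unrelated, much more elementary bootstrap.
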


We will see that this is a consequence of the following ``finite-volume criterion.'' For $n,m \geq 1$ let $E(n,m)$ be the left-right crossing event that there exists a vertex in $\{-n\} \times [-m,m] \times \{0,1\}$ connected to a vertex in $\{n\} \times [-m,m] \times \{0,1\}$ by an open path which stays in the box $[-n,n] \times [-m,m] \times \{0,1\}$. 

\begin{thm}\label{thm: mainthm2}
There exists $\e>0$ with the following property. If $n$ and $p$ are such that
\begin{equation}\label{eq: pizza}
\mathbb{P}_p(E(n,n)) > 1-\e\ ,
\end{equation}
then $\theta(p)>0$.
\end{thm}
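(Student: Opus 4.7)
The plan is a block renormalization argument on $\mathbb{Z}^2$, with the thin sandwich property taking over the role that planar duality plays in classical $\mathbb{Z}^2$ finite-size criteria.

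First I would upgrade the hypothesis via FKG. By the $\pi/2$ rotation symmetry of $\mathbb{P}_p$, the top-bottom crossing event for $[-n,n]^2 \times \{0,1\}$ has the same probability as $E(n,n)$, so FKG gives both a horizontal and a vertical crossing of this box with probability at least $(1-\varepsilon)^2$. Applying FKG to a bounded number of shifted copies of $E(n,n)$ yields open crossings of various larger rectangles, and with some further effort of certain sub-rectangles, with probability $1 - O(\varepsilon)$.

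Next comes the gluing step. In any $\mathbb{Z}^2$-rectangle, a left-right lattice path and a top-bottom lattice path necessarily share a vertex, by elementary planar topology. The thin sandwich property lifts this to $S$: a horizontal and a vertical open crossing of a common box in $S$, even if they sit in different layers, are connected in $S$. The same applies across overlapping boxes---whenever the $\mathbb{Z}^2$-projections of two open paths are forced to meet inside an overlap region, the paths themselves are connected in $S$.

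Then I would run the renormalization. Tile $\mathbb{Z}^2$ by overlapping translates $B_k$ of the $(2n+1) \times (2n+1)$ box and declare $k \in \mathbb{Z}^2$ \emph{good} if a prescribed finite list of open crossings of $B_k$ and its sub-rectangles occurs. The list is chosen so that (i) by FKG, $\mathbb{P}_p(k\text{ is good}) \ge 1 - \delta(\varepsilon)$ with $\delta(\varepsilon) \to 0$ as $\varepsilon \to 0$, and (ii) by the gluing step, the listed crossings are all connected in $S$ (forming a ``cross'' of $B_k$) and the crosses of any two neighboring good $k, k'$ are connected through the overlap of $B_k$ and $B_{k'}$. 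Since goodness depends only on the configuration inside $B_k$, which meets only finitely many other $B_{k'}$, the good process on $\mathbb{Z}^2$ is finite-range dependent. For $\varepsilon$ small the marginal exceeds the threshold of the Liggett--Schonmann--Stacey stochastic-domination theorem, so the set of good sites dominates a supercritical Bernoulli site percolation on $\mathbb{Z}^2$ and contains an infinite cluster almost surely. By (ii) this infinite good cluster induces an infinite open cluster in $S$, so $\theta(p) > 0$.

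The main obstacle will be designing the list of crossings in the definition of ``good'' so that both (i) and (ii) hold simultaneously. A single horizontal and single vertical crossing per block is not enough: the vertical crossing of $B_{k+(1,0)}$ may lie entirely in the right half of $B_{k+(1,0)}$ and therefore miss the overlap with $B_k$, leaving the two crosses disconnected. One must include enough auxiliary long-way sub-rectangle crossings to force the cross of each good block to enter each of its four overlap strips, at which point planar topology within the overlap together with the thin sandwich property closes the argument.
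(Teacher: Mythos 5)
The gap is in the sentence ``Applying FKG to a bounded number of shifted copies of $E(n,n)$ yields open crossings of various larger rectangles \ldots with probability $1-O(\e)$.'' Passing from square crossings to long-rectangle crossings is precisely the RSW problem, and FKG does not solve it, even in the planar case. Any attempt to glue two horizontal square crossings into a horizontal crossing of a wider box requires a vertical crossing of an overlap strip that both horizontal paths are guaranteed to traverse; but any such strip is itself a rectangle crossed the long way, so the argument is circular. The horizontal and vertical crossings of one square do connect into a ``cross'' via the thin sandwich property, but nothing forces that cross to meet the cross of a neighbouring block without long-way sub-rectangle crossings, which is what you set out to produce. This is exactly the content of the hardest part of the paper: Proposition~\ref{prop: weakRSW} gets a uniform constant lower bound on $\mathbb{P}_p(E(2n,n))$ via a Bollob\'as--Riordan exploration over left-most crossing ribbons (developed in the appendix), where the sandwich-specific step is forcing a vertical path onto the explored horizontal ribbon at an explicit constant-probability price near the junction; Proposition~\ref{prop: strongRSW} then removes that constant loss by demanding the junction event in one of order $\log n$ independent annuli, using Corollary~\ref{cor: pizzapizzapie}. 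Your proposal silently assumes the output of this whole machinery.

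The remainder of your plan --- overlapping block renormalization plus Liggett--Schonmann--Stacey stochastic domination --- is a legitimate alternative to the paper's concluding step, which instead spirals through boxes of geometrically growing size and invokes Borel--Cantelli (Section~2). Given the strong RSW bound either finishing move would work, and you correctly identify the right gluing device (thin sandwich plus planar topology in overlaps). But the quantitative bound on long-rectangle crossings is the substance of Theorem~\ref{thm: mainthm2} in this setting, and the proposal as written begs that question.
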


This theorem will be the main focus of this note. To deduce Theorem~\ref{thm: mainthm1} from it we suppose that $p$ is such that $\theta(p)>0$. Then by exactly the same argument as in \cite[Theorem~8.97]{Grimmett}, we have $\mathbb{P}_p(E(n,n)) \to 1$ as $n \to \infty$. In particular, \eqref{eq: pizza} is satisfied for large enough $n$. Fix such an $n=N$ and note that for $p'<p$ sufficiently close to $p$, \eqref{eq: pizza} will still be satisfied. Thus by Theorem~\ref{thm: mainthm2} we have $\theta(p')>0$ and so $\{p:\theta(p)>0\}$ does not contain $p_c$.

In the next section we will recall arguments that allow us to deduce that if ``rectangles'' of aspect ratio $2$ are crossed with high probability then $\theta(p)>0$. In the following section we will argue using an RSW-type construction that in fact \eqref{eq: pizza} suffices.

\section{Rectangle crossings imply percolation}

In this section we will explain how standard planar percolation arguments can be used to reduce the proof of Theorem~\ref{thm: mainthm2} to an RSW-type estimate. In other words, we will show

\begin{prop}
If $n$ and $p$ are such that
\begin{equation}\label{eq: pizza2}
\mathbb{P}_p(E(2n,n)) \geq 49/50\ ,
\end{equation}
then $\theta(p)>0$.
\end{prop}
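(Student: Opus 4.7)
The plan is to run a block renormalization on a rescaled copy of $\mathbb{Z}^2$, using the thin sandwich property in place of the planar duality that is unavailable in $S$. First, using the $90^{\circ}$ rotational symmetry of $S$ in the $xy$-plane, I would rewrite \eqref{eq: pizza2} to also give vertical crossings of the $2n\times 4n$ rotated rectangle with probability $\geq 49/50$. Then, choosing a length scale of order $n$, I would associate to each edge $e$ of $\mathbb{Z}^2$ an oblong block $R_e \subset S$ of side lengths $4n \times 2n$, oriented along $e$, placed so that whenever $e,e'$ share a $\mathbb{Z}^2$-vertex the projections $\pi(R_e)$ and $\pi(R_{e'})$ onto $\mathbb{Z}^2$ overlap in a region of diameter $\Theta(n)$, while for $e,e'$ with no common vertex the blocks are disjoint in $S$. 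An edge $e$ is declared \emph{good} if $R_e$ has an open crossing in its long direction; each good event then has probability $\geq 49/50$.

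The next step is the gluing. Since disjoint blocks give independent events, the family $\{R_e\text{ good}\}_e$ is $1$-dependent on the edge graph of $\mathbb{Z}^2$. If $e,e'$ share a vertex and are both good, their crossings project to subsets of $\mathbb{Z}^2$ that each meet the common overlap $\pi(R_e)\cap\pi(R_{e'})$, so by the thin sandwich property the two crossings are connected in $S$. Consequently, any infinite cluster of good edges in $\mathbb{Z}^2$ lifts to an infinite open cluster in $S$, giving $\theta(p)>0$.

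To conclude I would invoke the standard Peierls estimate for $1$-dependent bond percolation on $\mathbb{Z}^2$: if the marginal is sufficiently close to $1$, such a process percolates. This can be obtained either by citing Liggett--Schonmann--Stacey domination by a supercritical Bernoulli product measure, or directly by bounding the probability of a closed dual circuit around the origin, partitioning the circuit into independent chunks and summing over lengths. The numerical constant $49/50$ in \eqref{eq: pizza2} is calibrated precisely to exceed the threshold coming out of this computation.

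The main obstacle I expect is the geometric bookkeeping of the renormalization: one has to choose block sizes, positions, and offsets so that (i) the crossings of adjacent good blocks are always forced to meet in projection, (ii) non-adjacent blocks have disjoint supports (for $1$-dependence), and (iii) the marginal $49/50$ clears the Peierls threshold. Once this setup is fixed, the thin sandwich property handles the only genuinely new feature of the sandwich $S$, and the remaining ingredients are standard two-dimensional percolation arguments.
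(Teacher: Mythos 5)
Your proposal takes a genuinely different route from the paper: you set up a block renormalization to a $1$-dependent bond process on $\mathbb{Z}^2$ and invoke a Peierls/LSS estimate, whereas the paper first shows the sharp algebraic inequality
\[
\mathbb{P}_p\bigl(E(4n,2n)^c\bigr) \le \bigl[1-\mathbb{P}_p(E(2n,n))^5\bigr]^2 \le \tfrac12\, \mathbb{P}_p\bigl(E(2n,n)^c\bigr)
\]
(the constant $49/50$ is chosen precisely so that $(1-x^5)^2\le \frac12(1-x)$ for $x\ge 49/50$), then iterates this to get exponentially fast decay of $\mathbb{P}_p(E(2^{m+1}n,2^m n)^c)$, and finally uses a telescoping spiral of doubling boxes $B_k$ with alternating orientation plus Borel--Cantelli; the thin sandwich property glues consecutive perpendicular crossings. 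The paper's scheme sidesteps both the dependency-graph bookkeeping and any parallel-block gluing, since consecutive boxes $B_k$, $B_{k+1}$ are always crossed in perpendicular directions and one sits inside the other, forcing an intersection of the projected crossings by the Jordan curve theorem.

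The genuine gap in your sketch is the gluing step. You declare an edge good if $R_e$ has an open crossing in its \emph{long} direction, and then assert that good crossings of blocks $R_e$, $R_{e'}$ with $e,e'$ sharing a vertex ``each meet the common overlap, so by the thin sandwich property the two crossings are connected.'' But meeting a common region is not the same as having intersecting projections, and the thin sandwich property only glues paths whose projections actually intersect. In $\mathbb{Z}^2$ every edge has two collinear neighbours; for two collinear horizontal edges the overlap $\pi(R_e)\cap\pi(R_{e'})$ is a rectangle that both crossings traverse left to right, and two such left-right crossings can easily be disjoint in projection. To make the construction work you need a richer good event (e.g.\ Kesten-style: long crossings in both orientations together with vertical/horizontal crossings of the end squares that force all adjacent good crossings to cross a common square transversally), after which the perpendicular-crossings-in-a-square argument does force intersection of projections. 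With that fix the approach is viable, and a back-of-envelope Peierls sum suggests $1/50$ failure probability is comfortably below threshold for a $1$-dependent process on $\mathbb{Z}^2$; but as written the proposal neither specifies the enriched good event nor verifies that $49/50$ beats the resulting Peierls/LSS threshold, whereas the paper's telescoping argument makes the role of $49/50$ transparent.
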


\begin{proof}
Let $n \geq 1$. By constructing crossing paths and using the FKG inequality, standard arguments and the thin sandwich property (see Figure~\ref{fig: fig_1} for the construction) show that

\begin{figure}
\setlength{\unitlength}{.5in}
\begin{picture}(10,3)(-5.5,2.5)
\linethickness{2pt}
\put(-3,5){\line(1,0){8}}
\put(-3,3){\line(1,0){8}}
\put(-3,3){\line(0,1){2}}
\put(-1,3){\line(0,1){2}}
\put(1,3){\line(0,1){2}}
\put(3,3){\line(0,1){2}}
\put(5,3){\line(0,1){2}}
\linethickness{1pt}
\put(-3,4){\line(1,0){1}}
\put(-2,4){\line(0,1){.5}}
\put(-2,4.5){\line(1,0){2}}
\put(0,4.5){\line(0,-1){1}}
\put(0,3.5){\line(1,0){1}}
\put(-1,3.25){\line(1,0){3}}
\put(2,3.25){\line(0,1){.5}}
\put(2,3.75){\line(1,0){1}}
\put(1,4.75){\line(1,0){.5}}
\put(1.5,4.75){\line(0,-1){.8}}
\put(1.5,3.95){\line(1,0){2}}
\put(3.5,3.95){\line(0,-1){.25}}
\put(3.5,3.7){\line(1,0){1}}
\put(4.5,3.7){\line(0,1){1}}
\put(4.5,4.7){\line(1,0){.5}}
\linethickness{.5pt}
\put(-.3,5){\line(0,-1){1}}
\put(-.3,4){\line(1,0){1}}
\put(.7,4){\line(0,-1){1}}
\put(2.5,5){\line(0,-1){1.5}}
\put(2.5,3.5){\line(1,0){.25}}
\put(2.75,3.5){\line(0,-1){.5}}
\put(-3,3){\circle*{.15}}
\put(-1,3){\circle*{.15}}
\put(1,3){\circle*{.15}}
\put(5,3){\circle*{.15}}
\put(-3,5){\circle*{.15}}
\put(-3.4,2.6){$(0,0,0)$}
\put(-1.4,2.6){$(n,0,0)$}
\put(.6,2.6){$(2n,0,0)$}
\put(4.6,2.6){$(4n,0,0)$}
\put(-3.4,5.2){$(0,n,0)$}
\end{picture}
\caption{The left two boxes are crossed horizontally by an open path, as are the middle two and the right two. The vertical crossings in the middle two boxes ``glue'' these paths together.}
\label{fig: fig_1}
\end{figure}
\[
\mathbb{P}_p(E(4n,n)) \geq \left[\mathbb{P}_p(E(2n,n))\right]^5\ .
\]
By independence of edge variables in disjoint rectangles,
\[
\mathbb{P}_p(E(4n,2n)^c) \leq \left[ \mathbb{P}_p(E(4n,n)^c)\right]^2\ .
\]
Combining these,
\[
\mathbb{P}_p(E(4n,2n)^c) \leq \left[ 1-\mathbb{P}_p(E(2n,n))^5\right]^2\ .
\]
For $x \in [0,1]$, we have $(1-x^5)^2 \leq 25(1-x)^2$, so for $x \in [49/50,1]$, $(1-x^5)^2 \leq (1/2)(1-x)$. Applying this estimate to $x= \mathbb{P}_p(E(2n,n))$, we see that \eqref{eq: pizza2} implies
\[
\mathbb{P}_p(E(4n,2n)^c) \leq (1/2)\mathbb{P}_p(E(2n,n)^c)\ .
\]
By induction, for each $m \geq 0$,
\begin{equation}\label{eq: pizzapie4}
\mathbb{P}_p(E(2^{m+1}n,2^mn)^c) \leq (1/2^m) \mathbb{P}_p(E(2n,n)^c)\ .
\end{equation}

Now we use a standard construction (see, for instance, \cite[Figure~1]{Jarai} or \cite[Figure~6]{CC1}, \cite{CC2}). Set $B_0 = [0,2n] \times [0,n] \times \{0,1\}$ and for $k \geq 1$,
\[
B_k = \begin{cases}
[0,2^kn] \times [0,2^{k+1}n] \times \{0,1\} & k \text{ odd} \\
[0,2^{k+1}n] \times [0,2^kn] \times \{0,1\} & k \text{ even}
\end{cases}\ .
\]

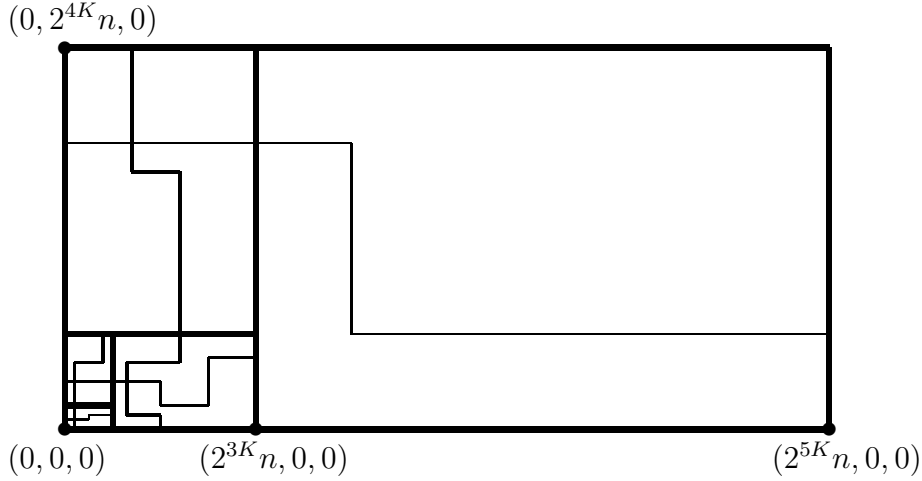
\begin{figure}
\setlength{\unitlength}{.5in}
\begin{picture}(10,5)(-5.5,.5)
\linethickness{2pt}
\put(-3,5){\line(1,0){8}}
\put(-3,1){\line(1,0){8}}
\put(-3,1){\line(0,1){4}}
\put(5,1){\line(0,1){4}}
\put(-1,1){\line(0,1){4}}
\put(-3,2){\line(1,0){2}}
\put(-2.5,1){\line(0,1){1}}
\put(-3,1.25){\line(1,0){.5}}
\linethickness{.5pt}
\put(-3,1.1){\line(1,0){.25}}
\put(-2.75,1.1){\line(0,1){.05}}
\put(-2.75,1.15){\line(1,0){.25}}
\linethickness{1pt}
\put(-2.9,1){\line(0,1){.7}}
\put(-2.9,1.7){\line(1,0){.3}}
\put(-2.6,1.7){\line(0,1){.3}}
\linethickness{.5pt}
\put(-3,1.5){\line(1,0){1}}
\put(-2,1.5){\line(0,-1){.25}}
\put(-2,1.25){\line(1,0){.5}}
\put(-1.5,1.25){\line(0,1){.5}}
\put(-1.5,1.75){\line(1,0){.5}}
\linethickness{1pt}
\put(-2,1){\line(0,1){.15}}
\put(-2,1.15){\line(-1,0){.35}}
\put(-2.35,1.15){\line(0,1){.55}}
\put(-2.35,1.7){\line(1,0){.55}}
\put(-1.8,1.7){\line(0,1){2}}
\put(-1.8,3.7){\line(-1,0){.5}}
\put(-2.3,3.7){\line(0,1){1.3}}
\linethickness{.5pt}
\put(-3,4){\line(1,0){3}}
\put(0,4){\line(0,-1){2}}
\put(0,2){\line(1,0){5}}
\put(-3,1){\circle*{.15}}
\put(-1,1){\circle*{.15}}
\put(5,1){\circle*{.15}}
\put(-3,5){\circle*{.15}}
\put(-3.6,.6){$(0,0,0)$}
\put(-1.6,.6){$(2^{3K}n,0,0)$}
\put(4.4,.6){$(2^{5K}n,0,0)$}
\put(-3.6,5.2){$(0,2^{4K}n,0)$}
\end{picture}
\caption{The event $\cap_{k=K}^{K+4} E_k$ for $K$ odd. The boxes alternate in orientation, the smallest being $B_K$ and working up to $B_{K+4}$. The open crossings (shown) associated to consecutive boxes are forced to touch by the thin sandwich property. Therefore for any $K$, $\cap_{k=K}^\infty E_k$ implies the existence of an infinite open cluster.}
\label{fig: fig_2}
\end{figure}

For $k \geq 0$ and $k$ odd, let $E_k$ be the event that there is an open path in $B_k$ that touches both sides $[0,2^kn] \times \{0\} \times \{0,1\}$ and $[0,2^kn] \times \{2^{k+1}n\} \times \{0,1\}$. For $k$ even we define $E_k$ as the event that there is an open path in $B_k$ that touches both sides $\{0\} \times [0,2^kn] \times \{0,1\}$ and $\{2^{k+1}n\} \times [0,2^kn] \times \{0,1\}$. (See Figure~2 for an illustration of these events.) Because $\mathbb{P}(E_k^c)$ decays exponentially in $k$ (by inequality \eqref{eq: pizzapie4}), with probability one all but finitely many $E_k$'s occur. By the thin sandwich property, this implies the existence of an infinite open path.
\end{proof}

\section{RSW crossing bounds}

For the remainder, we focus on showing that there exists $\e>0$ such that \eqref{eq: pizza} implies \eqref{eq: pizza2}. There are two main difficulties that distinguish this sandwich from the planar case. First, when trying to make connections between paths, it is not enough for their projections onto $\mathbb{Z}^2$ to connect. Indeed, the projections of two non-adjacent sites in the sandwich can be adjacent. For this reason we will have to ``force'' connections at the cost of some constant probability factor. This brings up the second problem, which is that we would like crossing events to hold with probability tending to 1, and the constant probability cost inhibits this. Therefore we will use a bootstrapping argument (in the proof of Proposition~\ref{prop: strongRSW}) that allows to upgrade a weak crossing bound, Proposition~\ref{prop: weakRSW}, to a strong crossing bound.

\begin{lem}\label{lem: crossing}
Suppose that $\theta(p)>0$. Then
\[
\lim_{n \to \infty} \mathbb{P}_p(E(n,n)) = 1\ .
\]
\end{lem}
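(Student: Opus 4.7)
Assume $\theta(p)>0$. I adapt the standard 2D supercritical argument to the slab $S=\mathbb{Z}^2\times\{0,1\}$. The first step is to apply Burton--Keane and conclude that there is $\mathbb{P}_p$-almost surely a unique infinite open cluster $\mathcal{C}_\infty$: the slab is amenable and admits a transitive translation action of $\mathbb{Z}^2$, so the trifurcation count and the resulting contradiction from the density of encounter points goes through verbatim.

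Second, I use ergodicity of the $y$-shift, which is mixing for $\mathbb{P}_p$. Applying Birkhoff's theorem gives that for each fixed $x\in\mathbb{Z}$ and $z\in\{0,1\}$,
\[
\lim_{n\to\infty}\frac{1}{2n+1}\sum_{y=-n}^{n}\mathbf{1}\{(x,y,z)\in\mathcal{C}_\infty\}=\theta(p)>0 \quad\text{a.s.}
\]
In particular $\mathbb{P}_p(\{\pm n\}\times[-n,n]\times\{0,1\}\cap\mathcal{C}_\infty\neq\emptyset)\to 1$. Combined with Step~1, we conclude that with probability tending to $1$ the two lateral faces of $B_n=[-n,n]^2\times\{0,1\}$ contain vertices of $\mathcal{C}_\infty$ which, by uniqueness, are joined by an open path in $S$.

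The main obstacle is that this connecting path may exit $B_n$, so Steps~1--2 only give a connection between the faces through some (uncontrolled) detour. The standard remedy is a block renormalization: for a large parameter $M$, declare an $M$-block \emph{good} if its intersection with $\mathcal{C}_\infty$ contains a connected cluster that reaches each of its four lateral faces. By the ergodic/uniqueness argument applied locally, the probability that a single block is good is close to $1$ for $M$ large. Uniqueness forces the large clusters in two adjacent good blocks to coincide with $\mathcal{C}_\infty$ and therefore to merge through open vertices within a bounded neighborhood of their common face (this is where the thin sandwich property is used to glue the two planar-like crossings together into a bona fide open connection in $S$). A standard domination by a highly supercritical site percolation on the block graph $\mathbb{Z}^2$ then yields, with probability tending to $1$ in $n/M$, a horizontal chain of good blocks crossing $B_n$, whose $\mathcal{C}_\infty$-clusters concatenate into an open left-right crossing of $B_n$ contained entirely in $B_n$. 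This is precisely the pattern of Grimmett's Theorem~8.97, modulo the extra care required by the slab geometry.
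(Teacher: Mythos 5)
Your steps 1--2 (Burton--Keane uniqueness, plus ergodicity to ensure the lateral faces of $B_n$ meet $\mathcal{C}_\infty$ with probability tending to $1$) are correct and are the same inputs the paper invokes when it cites Grimmett's Theorem~8.97 and remarks that uniqueness holds for $S$. The gap is in step~3. You define an $M$-block to be good if $\mathcal{C}_\infty$ restricted to the block contains a component touching all four lateral faces, and assert that ``by the ergodic/uniqueness argument applied locally'' this event has probability close to $1$ for $M$ large. But ergodicity only gives that a positive density of vertices in the block lie in $\mathcal{C}_\infty$, and uniqueness only gives that all such vertices are connected \emph{somewhere} in $S$; neither controls whether the connecting paths stay in or near the block. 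So ``a block is good w.h.p.'' is essentially a restatement of the lemma itself (indeed a strengthening: a crossing contained in the box which moreover lies in $\mathcal{C}_\infty$), and asserting it without argument makes the proposal circular. This is exactly the step where the proof of Grimmett's Theorem~8.97 does nontrivial work --- converting ``both faces meet $\mathcal{C}_\infty$'' into ``$\mathcal{C}_\infty$ crosses the box \emph{inside} the box'' --- and a correct writeup must either reproduce that argument and check it survives the passage to $\mathbb{Z}^2\times\{0,1\}$ (which is what the paper does, in one line of citation) or supply a genuine substitute. The renormalization you sketch does not create such a substitute; it merely relocates the gap to the single-block estimate. A minor secondary remark: the thin sandwich property plays no essential role here, since two open paths that share a vertex are automatically connected; that property is needed elsewhere in the paper to glue crossings whose \emph{projections} meet.
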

\begin{proof}
Identical to the one in \cite[Theorem~8.97]{Grimmett}. This argument uses uniqueness of the infinite occupied cluster, which is easily verified for the sandwich $S$.
\end{proof}

\begin{prop}[Weak RSW bound]\label{prop: weakRSW}
Suppose that $\theta(p)>0$. There exists $c_1>0$ such that for all $n$,
\[
\mathbb{P}_p(E(2n,n)) \geq c_1\ .
\]
\end{prop}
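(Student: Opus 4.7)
My plan is to adapt the classical Russo--Seymour--Welsh (RSW) argument to the sandwich $S$, using the thin sandwich property to lift planar intersections of $\mathbb{Z}^2$-projections into genuine open connections in $S$.

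I would first dispose of small $n$: since $\theta(p)>0$ forces $p \geq p_c > 0$, the explicit straight-line open path through $(-2n,0,0),(-2n+1,0,0),\ldots,(2n,0,0)$ has probability $p^{4n+1}$, yielding $\mathbb{P}_p(E(2n,n)) \geq p^{4n_0+1}$ for all $n \leq n_0$ (any fixed threshold). It then remains to treat $n \geq n_0$, where by Lemma~\ref{lem: crossing} I can pick $n_0$ large so that $\mathbb{P}_p(E(n,n)) \geq 1-\delta$ for a small constant $\delta > 0$. By the $90^\circ$ rotational symmetry of $S$, vertical crossings of $2n \times 2n$ boxes also have probability $\geq 1-\delta$.

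For $n \geq n_0$, I would run the classical RSW construction. The setup uses the two flanking $2n \times 2n$ boxes $Q_- = [-2n,0] \times [-n,n]$ and $Q_+ = [0,2n]\times[-n,n]$ together with suitably overlapping central $2n \times 2n$ boxes; by FKG, horizontal crossings of $Q_\pm$ and vertical crossings of the central boxes occur simultaneously with probability at least $(1-\delta)^{O(1)}$. To force gluing I would apply Russo's lowest-crossing trick in the central boxes: condition on the lowest horizontal crossing $L$ (with ``lowest'' defined via the $\mathbb{Z}^2$-projection), and invoke the reflection symmetry $y \mapsto -y$ to show that projections of the flanking crossings share a vertex with $L$ (or with a reflected version) with constant probability. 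The thin sandwich property then promotes each projection-intersection to a genuine open connection in $S$, producing a horizontal crossing of $[-2n,2n] \times [-n,n] \times \{0,1\}$ with probability at least a constant $c_1 > 0$.

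The hardest part will be verifying that Russo's lowest-crossing argument carries over faithfully to the sandwich. Two ingredients must be checked: (i) the ``lowest'' sandwich crossing $L$, defined via its $\mathbb{Z}^2$-projection, is measurable with respect to sandwich sites whose projections lie on or below the projection of $L$, so that sites projected strictly above it are independent of $L$; and (ii) $\mathbb{P}_p$ is invariant under the reflection $y \mapsto -y$ (immediate from the product structure). Both hold, and with the thin sandwich property replacing planar connectivity throughout, no additional probability cost for ``forcing'' connections is incurred, so the bound degrades only by a constant factor.
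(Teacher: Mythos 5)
Your overall framework is right—lowest crossing defined via projection, reflection symmetry, FKG, and the reduction to large $n$ via Lemma~\ref{lem: crossing}—but the central claim in your last paragraph, that ``no additional probability cost for `forcing' connections is incurred,'' is exactly wrong, and it is precisely where the thin sandwich property does \emph{not} save you. The thin sandwich property requires the two projections to \emph{intersect}; here they are only \emph{adjacent}. After conditioning on the lowest (or left-most) crossing ribbon $R$, independence forces the path from the right side to live in the region strictly outside $R$. Its last vertex is therefore merely adjacent to some site $w\in R$, with $\pi(w)$ \emph{not} on the path's projection. The open crossing inside $R$ occupies only one of the two sites above each projected vertex, and the site $w$ that your flanking path touches is not necessarily the one on the crossing path and is not necessarily open; the event that $R$ is the lowest crossing ribbon gives no information about which sites of $R$ are open beyond the existence of some open crossing.

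This is the difficulty the paper spends most of Lemma~\ref{lem: BR} addressing: it decomposes once more over the lowest left-right crossing ribbon $U$ of the region $G(R)$, isolates a region $G(R,U)$ independent of both $D(R)$ and $D(R,U)$, locates a short deterministic path $\mathcal{P}(R,U)$ (length at most $2$) in $G(R,U)$ hooking $R$ to $U$, and pays the price $p^4$ to force the ribbon of $\mathcal{P}(R,U)$ to be entirely open. Your proposal has no analogue of this ``hooking'' step and would therefore fail at the gluing stage. To repair it you need to recognize that closing the gap between the adjacent projections costs a constant factor, and you must set up the extra conditioning so that the sites you force open are independent of both lowest-crossing events.
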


\begin{proof}
We will use the method of Bollobas and Riordan \cite{BR}. By gluing paths with the FKG inequality and the thin sandwich property, it suffices to show the above estimate for the probability $\mathbb{P}_p(E(3n/2,n))$. Consider the boxes $S_1 = [0,n] \times [0,n] \times \{0,1\}$, $S_2 = [n/2,3n/2] \times [0,n] \times \{0,1\}$ and $S_3 = [n/2,n] \times [0,n/2] \times \{0,1\}$. Label the left, right, top and bottom sides of $S_1$ as $L_1 = \{0\} \times [0,n] \times \{0,1\}$, $R_1 = \{n\} \times [0,n] \times \{0,1\}$, $T_1 = [0,n] \times \{n\} \times \{0,1\}$ and $B_1 = [0,n] \times \{0\} \times \{0,1\}$. Make similar definitions for $L_i, R_i, T_i$ and $B_i$, $i=2, 3$. See Figure~\ref{fig: fig_3} for an illustration of these definitions.

\begin{figure}
\setlength{\unitlength}{.5in}
\begin{picture}(10,5.5)(-5.5,.5)
\linethickness{1.5pt}
\put(-2,5){\line(1,0){6}}
\put(-2,5){\line(0,-1){4}}
\put(-2,1){\line(1,0){6}}
\put(4,5){\line(0,-1){4}}
\put(0,5){\line(0,-1){4}}
\put(2,5){\line(0,-1){4}}
\put(0,3){\line(1,0){2}}
\put(.8,.5){$S_3$}
\put(1.3,.6){\line(1,0){.7}}
\put(.7,.6){\line(-1,0){.7}}
\put(2,.45){\line(0,1){.3}}
\put(0,.45){\line(0,1){.3}}
\put(-.2,5.4){$S_1$}
\put(.3,5.5){\line(1,0){1.7}}
\put(-.3,5.5){\line(-1,0){1.7}}
\put(2,5.35){\line(0,1){.3}}
\put(-2,5.35){\line(0,1){.3}}
\put(1.8,5.8){$S_2$}
\put(2.3,5.9){\line(1,0){1.7}}
\put(1.7,5.9){\line(-1,0){1.7}}
\put(4,5.75){\line(0,1){.3}}
\put(0,5.75){\line(0,1){.3}}
\put(4.2,3){$R_2$}
\put(.8,3.2){$T_3$}
\linethickness{.5pt}
\put(.4,3){\line(0,-1){1}}
\put(.4,2){\line(1,0){.2}}
\put(.6,2){\line(0,-1){1}}
\multiput(.4,3)(0,.2){5}{\line(0,1){.1}}
\put(.4,4){\line(1,0){.1}}
\multiput(.6,4)(0,.2){5}{\line(0,1){.1}}
\put(.4,2.5){\line(1,0){2}}
\put(2.4,2.5){\line(0,-1){.3}}
\put(2.4,2.2){\line(1,0){.4}}
\put(2.8,2.2){\line(0,1){2}}
\put(2.8,4.2){\line(1,0){1.2}}
\put(-2.4,.6){$(0,0,0)$}
\put(-2,1){\circle*{.15}}
\put(3.6,5.3){$(\frac{3n}{2},n,0)$}
\put(4,5){\circle*{.15}}
\put(2.5,1.2){$U$}
\linethickness{1pt}
\put(2.6,1.5){\vector(0,1){.6}}
\put(-.7,1.5){$R$}
\put(-.4,1.6){\vector(1,0){.9}}
\put(-.7,4.2){$\widetilde R$}
\put(-.4,4.3){\vector(1,0){.9}}
\end{picture}
\caption{The boxes used in the weak RSW bound. The ribbon $R$ crossing $S_3$ is the ``left-most'' open crossing ribbon and the dotted ribbon $\widetilde R$ is its reflection about $T_3$. The ribbon $U$ connecting to $R_2$ is the lowest open crossing ribbon of the region to the right of $R \cup \widetilde R$. At the junction of these two ribbons we build a path $\mathcal{P}(R,U)$ (not shown) to connect the paths contained in the ribbons.}
\label{fig: fig_3}
\end{figure}

We now state a main lemma and describe how Proposition~\ref{prop: weakRSW} follows directly from it. Let $F_n$ be the event that there is an open top-down crossing path of $S_3$ that is connected in $S_2$ by an open path to $R_2$.
\begin{lem}\label{lem: BR}
Suppose that $\theta(p)>0$. For all large $n$,
\[
\mathbb{P}_p(F_n) \geq p^4/4\ .
\]
\end{lem}
Proposition~\ref{prop: weakRSW} follows quickly from this lemma. Indeed, let $G_n$ be the event that there is an open top-down crossing of $S_3$ that is connected in $S_1$ to $L_1$. Also let $H_n$ be the event that there is an open left-right crossing of $S_3$. Using the FKG inequality,
\begin{eqnarray*}
\mathbb{P}_p(E(3n/2,n)) &\geq& \mathbb{P}_p(F_n \cap G_n \cap H_n) \\
&\geq&  \mathbb{P}_p(F_n)^2 ~ \mathbb{P}_p(E(n/2,n/2)))\ ,
\end{eqnarray*}
where the first inequality follows from the thin sandwich property. By Lemmas~\ref{lem: crossing} and~\ref{lem: BR} this is bounded away from zero. This completes the proof.
\end{proof}

\begin{proof}[Proof of Lemma~\ref{lem: BR}]

We first explain the notion of a left-most top-down open crossing of a box (from the appendix). If $P$ is a path in $S_3$ that touches both $T_3$ and $B_3$ (exactly once each) we call $P$ a {\it top-down crossing path} in $S_3$. Define $\pi:S \to \mathbb{Z}^2$ by $\pi(x,y,z) = (x,y)$ and for any set $X \subset S$ set $\pi(X) = \{\pi(v) : v \in X\}$. Also associate to $X$ a {\it ribbon}: $R(X) = \{v \in S: \pi(v) \in \pi(X)\}$.

To a ribbon $R$ of a top-down crossing path in $S_3$, we may associate the region in $S_3$ ``to the left'' of $R$. This is a region of $S_3$ (viewed as a subset of $\mathbb{R}^3$) that does not contain any sites off $R$ that can be connected to the right side of $S_3$ by a lattice path without intersecting $R$ (the region ``to the right'' of $R$). This region is labeled $L(R)$ and the appendix outlines a method to define a ``left-most'' such ribbon; the event that a given ribbon $R$ is the left-most is denoted $D(R)$. We state here two properties of this construction (from Proposition~\ref{prop: ribbon_leftmost}):
\begin{enumerate}
\item The event $A_n$ that there is an open top-down crossing of $S_3$ is the disjoint union $\cup_R D(R)$ over all ribbons of self-avoiding top-down crossings of $S_3$.
\item For each such $R$, the event $D(R)$ depends only on the state of vertices in the region $\overline{L}(R) = R \cup L(R)$.
\end{enumerate}
Any (not necessarily unique) crossing $P$ in the left-most crossing ribbon we call a left-most crossing path.

We now decompose $A_n$ over the events $D(R)$ for the given $n$:
\[
\mathbb{P}_p(A_n) = \sum_{R}\mathbb{P}_p(D(R))\ ,
\]
where the sum is over all top-down crossing ribbons of $S_3$ (not necessarily containing an open path). For any top-down crossing ribbon $R$ of $S_3$, write $\widetilde R$ for the reflection through $T_3$,
\[
\widetilde R = \{(x,y,z) : (x,n-y,z) \in R\}
\]
and let $G(R)$ be the region in $S_2$ strictly ``to the right'' of $R \cup \widetilde R$. That is, $G(R)$ is the set of vertices that can be connected to both $R$ and the right side of $S_2$ by paths (this is a slightly different definition than the left). Define $C(R)$ as the event that there is an open path in $S_2$ that connects the right side $R_2$ to the ribbon $R \cup \widetilde R$. In this event we merely insist that the path contains a vertex in $G(R)$ that is adjacent to $R\cup \widetilde R$, so it is independent of $D(R)$. So we find
\[
\mathbb{P}_p(D(R),C(R)) = \mathbb{P}_p(D(R))~\mathbb{P}_p(C(R)) \geq \mathbb{P}_p(D(R)) \mathbb{P}_p(E(n,n))\ .
\]
Let $C_1(R)$ be the event that there is an open path in $G(R)$ that connects the right side $R_2$ to the ribbon $R$ and $C_2(R)$ the same event but for $\widetilde R$. By symmetry, for fixed $R$, both events $C_1(R)$ and $C_2(R)$ have the same probability. Since these are increasing events,
\[
1-\mathbb{P}_p(E(n,n)) \geq \mathbb{P}_p(C(R)^c) = \mathbb{P}_p(C_1(R)^c \cap C_2(R)^c) \geq \mathbb{P}_p(C_1(R)^c)^2\ ,
\]
so 
\[
\mathbb{P}_p(C_1(R)) \geq 1- \left[ 1-\mathbb{P}_p(E(n,n)) \right]^{1/2} \geq 1/2
\]
for $n$ large enough. Therefore, for $n$ large,
\[
\sum_R \mathbb{P}_p(D(R),C_1(R)) = \sum_R \mathbb{P}_p(D(R)) \mathbb{P}_p(C_1(R)) \geq (1/2) \mathbb{P}_p(A_n) \geq 1/4\ .
\]

In $\mathbb{Z}^2$, the proof would be complete because $D(R) \cap C_1(R)$ would imply the event $F_n$. However, in $S$, a path used in the event $C_1(R)$ does not actually have to meet an open top-down crossing of $S_3$ contained in $R$. To fix this, we need to `hook' the paths together using open sites. Again we must use some independence to do it. Consider again the region $G(R)$ to the right of $R \cup \widetilde R$. We define analogously to before a lowest left-right open crossing $P$ of $G(R)$ and the unique lowest left-right open crossing ribbon $R(P)$ (recall that by `open' we only mean the ribbon contains an open crossing path). We partition the event $D(R) \cap C_1(R)$ according to this ribbon: for any left-right crossing ribbon $U$ of $G(R)$ we write $D(R,U)$ for the event that $U$ is the lowest one. Again, for distinct $U_1$ and $U_2$, the events $D(R,U_1)$ and $D(R,U_2)$ are disjoint. Therefore we write
\[
\mathbb{P}_p(D(R),C_1(R)) = \sum_U \mathbb{P}_p(D(R),D(R,U))\ ,
\]
where the sum is over all left-right crossing ribbons $U$ of $G(R)$ that connect the right side $R_2$ to $R$ (that is, contain a site in $G(R)$ with a neighbor in $R$). For such $R$ and $U$, let $G(R,U)$ be the region of $G(R)$ above $U$ unioned with $\widetilde R$. Note that $D(R)$ and $D(R,U)$ are independent of the state of vertices in $G(R,U)$. We may find a (deterministic) path $\mathcal{P}(R,U)$ in $G(R,U)$ with at most 2 vertices with one neighbor in $R$ and one neighbor in $U$. Let $A(R,U)$ be the event that all vertices on the ribbon of $\mathcal{P}(R,U)$ are open. Note that $D(R) \cap D(R,U) \cap A(R,U)$ implies that $F_n$ occurs and that $\mathbb{P}_p(A(R,U)) \geq p^4$. This implies
\[
\mathbb{P}_p(F_n) \geq p^4 \sum_{R,U} \mathbb{P}_p(D(R),D(R,U)) \geq p^4/4\ .
\]
\end{proof}

For the next corollary, write $Ann(m,n)$ for the annulus of inner radius $m$ and outer radius $n$; that is, $B(n) \setminus B(m)$, where $B(m) = [-m,m]^2 \times \{0,1\}$. 
\begin{cor}\label{cor: pizzapizzapie}
For each $n$ let $\mathcal{A}_n$ be the event that there is an open circuit around the origin in $Ann(n,2n)$. Suppose that $\theta(p)>0$. There exists $c_2>0$ such that for all $n$, 
\[
\mathbb{P}_p(\mathcal{A}_n) \geq c_2\ .
\]
\end{cor}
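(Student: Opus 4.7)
\emph{Proof plan.} The strategy is to construct an open circuit in $Ann(n,2n)$ by concatenating four long rectangle crossings arranged as a ``picture frame'' around $B(n)$, combining them via FKG and the thin sandwich property. The essential input is the weak RSW estimate of Proposition~\ref{prop: weakRSW}.

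First I would bootstrap Proposition~\ref{prop: weakRSW} from aspect-ratio $2$ to aspect-ratio $4$. Place three translates of a $4n \times 2n$ box end-to-end so that their pairwise intersections are two $2n \times 2n$ squares and their union is an $8n \times 2n$ box. Require open long-direction crossings of each of the three rectangles together with open short-direction crossings of each of the two overlap squares. All five are increasing events, so by FKG their joint probability is at least $\mathbb{P}_p(E(2n,n))^3 \mathbb{P}_p(E(n,n))^2 \geq c_1^3/4$ for $n$ large, using Proposition~\ref{prop: weakRSW} and Lemma~\ref{lem: crossing}. Inside each overlap square, the short-direction crossing and each of the two horizontal crossings passing through it have projections into $\mathbb{Z}^2$ that must meet by planarity, so the thin sandwich property concatenates all five paths into a single open long-direction crossing of the $8n \times 2n$ box. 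Hence there is some $c>0$ (and a threshold $N$) so that for all $n \geq N$ any $8n \times 2n$ box is crossed in its long direction with probability at least $c$.

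Next, writing $n=2m$, consider the four ``arms'' inside $Ann(2m,4m) = B(4m)\setminus B(2m)$: $[-4m,4m]\times[2m,4m]\times\{0,1\}$ (top), $[-4m,4m]\times[-4m,-2m]\times\{0,1\}$ (bottom), $[-4m,-2m]\times[-4m,4m]\times\{0,1\}$ (left), $[2m,4m]\times[-4m,4m]\times\{0,1\}$ (right). Each is a translate of an $8m \times 2m$ box and hence has a long-direction open crossing with probability at least $c$. Adjacent arms (for example, top and right) overlap in a $2m \times 2m$ corner square (for that pair, $[2m,4m]^2\times\{0,1\}$), and within that corner square the top arm's crossing must traverse from its western edge to its eastern edge while the right arm's crossing traverses from its northern edge to its southern edge. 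By planarity their projections must cross, so the thin sandwich property forces the two open paths to connect in $S$. By FKG the four long crossings co-occur with probability at least $c^4$, and together they form an open circuit around $\vec 0$ entirely contained in $Ann(n,2n)$. Setting $c_2 = c^4$, and using the trivial bound $\mathbb{P}_p(\mathcal{A}_n)>0$ to cover the finitely many small $n$ below the threshold $N$, yields the corollary.

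The main delicacy is dimensional bookkeeping: the arms must lie inside the annulus, they must overlap pairwise in a corner square where planarity forces the long-direction crossings to meet transversely in projection, and each arm's aspect ratio must match what the iterated weak RSW bound provides. Once the geometry is arranged, the thin sandwich property handles all gluings—both in the aspect-ratio upgrade and at the four corners of the frame—without further work.
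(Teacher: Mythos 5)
Your proof is correct and carries out exactly the ``standard gluing argument'' that the paper invokes in a single line: bootstrapping the weak RSW bound from aspect ratio $2$ to aspect ratio $4$ and then assembling four long crossings of the annular arms into a circuit via FKG, planarity of the $\mathbb{Z}^2$-projections, and the thin sandwich property. The only detail you gloss over is the parity restriction $n=2m$ (and the interchange with the threshold $N$, since odd $n$ can also be large); handling odd $n$ is a routine adjustment of box sizes and does not constitute a genuine gap.
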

\begin{proof}
The proof is a standard gluing argument using Proposition~\ref{prop: weakRSW}.
\end{proof}

\begin{prop}[Strong RSW bound]\label{prop: strongRSW}
Suppose that $\theta(p)>0$. Then
\[
\mathbb{P}_p(E(2n,n)) \to 1 \text{ as } n \to \infty\ .
\]
\end{prop}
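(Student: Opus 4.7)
Plan: I would prove the strong rectangle crossing bound by a bootstrap that upgrades Lemma \ref{lem: crossing} (strong square crossings) to $\mathbb{P}_p(E(2n,n))\to 1$, using Corollary \ref{cor: pizzapizzapie} (weak annulus circuits) at many disjoint scales to amplify a weak gluing into a strong one. The whole point of the bootstrap is that each individual ``forced'' connection in the thin sandwich costs a constant probability factor, so to get a target probability tending to $1$ one needs many independent attempts at the gluing.

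Fix $\epsilon>0$. By Lemma \ref{lem: crossing}, $\mathbb{P}_p(E(n,n)^c)\leq\epsilon$ for $n$ large. Cover the target rectangle $[-2n,2n]\times[-n,n]\times\{0,1\}$ by two overlapping $2n\times 2n$ squares $B_L=[-2n,0]\times[-n,n]\times\{0,1\}$ and $B_R=[0,2n]\times[-n,n]\times\{0,1\}$; each carries an open left-right crossing $\gamma_L,\gamma_R$ with probability at least $1-\epsilon$ by translation invariance. A target LR crossing results as soon as $\gamma_L$ and $\gamma_R$ end up in the same open cluster, so what remains is to build, with probability tending to $1$, a ``gluing'' structure near $\{x=0\}$ that joins them.

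I would supply this gluing via a dyadic family of $K=\Theta(\log n)$ pairwise disjoint annuli $A_{i,j}=Ann((0,y_{i,j});r_j,2r_j)$ placed along $\{x=0\}\times[-n,n]$, at dyadic scales $r_j=2^j r_0$ and with centers chosen so that the annuli are disjoint and their inner squares jointly cover $\{0\}\times[-n,n]$. By a translated form of Corollary \ref{cor: pizzapizzapie} each $A_{i,j}$ contains an open circuit with probability $\geq c_2$, and disjointness gives independence, so with probability at least $1-(1-c_2)^K\to 1$ some annulus contains a circuit. When the inner square of such a circuit contains the endpoints of $\gamma_L$ and $\gamma_R$ on $\{x=0\}$, two-dimensional topology forces their $\mathbb{Z}^2$-projections to cross the circuit's projection (each runs from outside to inside the Jordan domain enclosed by it), and the thin sandwich property joins $\gamma_L$, the circuit, and $\gamma_R$ into a single open cluster, yielding the LR crossing of the target. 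A union bound over the failures of $\gamma_L,\gamma_R$ and of the annulus family gives $\mathbb{P}_p(E(2n,n)^c)\to 0$.

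The main obstacle is engineering the dyadic annulus family so that disjointness (needed for independence and hence amplification) coexists with coverage of the endpoint column: at a single scale $r_j$, pairwise disjoint annuli must be spaced at least $4r_j$ apart, so their inner squares cover only half the column --- no single scale suffices. The resolution is the multi-scale dyadic packing, with finer-scale annuli filling the gaps left by coarser ones while remaining disjoint across scales via nesting (finer annuli sit inside the inner squares of coarser ones). One also needs a single annulus large enough to catch both endpoints of $\gamma_L$ and $\gamma_R$ at once, which is achieved by including scales up to a constant fraction of $n$; alternatively, a chain of overlapping annuli at different centers can be used, absorbing the extra factor into the union bound.
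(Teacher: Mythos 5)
Your plan correctly identifies the engine driving the proof---use Corollary~\ref{cor: pizzapizzapie} at $\Theta(\log n)$ disjoint scales to amplify a constant-probability gluing step into one that succeeds with probability tending to $1$---but there is a genuine gap in \emph{where} those $\Theta(\log n)$ independent attempts are placed, and the gap is exactly the tension you acknowledge at the end without resolving it.

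You lay down a deterministic family of disjoint annuli along $\{x=0\}\times[-n,n]$ and compute that with probability $1-(1-c_2)^K\to 1$ \emph{some} annulus contains a circuit. But that event does not produce a gluing of $\gamma_L$ and $\gamma_R$. What you need is a circuit in an annulus whose inner region contains \emph{both} random endpoints $z_L, z_R$ of $\gamma_L, \gamma_R$ on the column. Those endpoints are, in the worst case, at $y$-distance of order $n$ from one another, so the only annuli that catch both are the $O(1)$ coarsest ones in your dyadic family; a constant number of independent $c_2$-attempts gives only a constant lower bound, not $\to 1$. Your two proposed fixes do not close this: including scales up to a constant fraction of $n$ still gives $O(1)$ useful annuli, while a chain of overlapping circuits from $z_L$ to $z_R$ requires \emph{all} circuits in the chain to occur simultaneously, whose probability is at most $c_2^{\Theta(\mathrm{length})}\to 0$. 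In short, the annuli that could glue your pair of crossings are determined by random data, so disjointness/independence of a fixed family does not translate into an amplification.

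The paper removes the randomness in the junction location before placing the annuli. It decomposes over the left-most top-down crossing ribbon $R$ of $S_3$ (Proposition~\ref{prop: ribbon_leftmost}) and then over the lowest crossing ribbon $U$ in $G(R)$ from $R_2$ to the lower piece $R_L$ of $R$. Conditioned on $D(R)\cap D(R,U)$, there is a \emph{deterministic} vertex $v$ within distance $2$ of both $R$ and $U$. The $k=\frac12\lfloor\log n\rfloor$ annuli are taken concentric about $v$, each intersected with $\widetilde G(R,U)$ (the region unexplored by $D(R)$ and $D(R,U)$); each such annulus is ``straddled'' by both $R$ and $U$, so an innermost open quarter-circuit plus two short forced paths (cost $p^8$) connects $R$ to $U$ inside that annulus, giving $\mathbb{P}_p(H_j(R,U))\geq c>0$, independently over $j$. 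That is the step your argument is missing: the left-most/lowest ribbon conditioning is what localizes $\Theta(\log n)$ independent gluing attempts at a single, known point, allowing $1-c^{\frac14\log n}\to 1$. Without it, the outer structure of your proof (squares plus annuli) cannot be made to converge.
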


\begin{proof}
As before, it suffices to show such a limit for the probability $\mathbb{P}_p(E(3n/2,n))$. Define $S_1, S_2$ and $S_3$ as in the proof of Proposition~\ref{prop: weakRSW}. We begin by noting that, just as before, it suffices to show that $\mathbb{P}_p(F_n) \to 1$, where $F_n$ is the event that there is a top-down open crossing of $S_3$ that is connected in $S_2$ by an open path to the right side $R_2$ of $S_2$. To do this, we make the same decomposition as before, writing $A_n$ as the event that there is a top-down open crossing of $S_3$ and 
\[
\mathbb{P}_p(A_n) = \sum_R \mathbb{P}_p(D(R))\ ,
\]
where the sum is over all top-down crossing ribbons of $S_3$. We again denote by $\widetilde R$ the reflected ribbon $\{(x,y,z):(x,n-y,z) \in R\}$ and let $G(R)$ be the region strictly to the right of $R \cup \widetilde R$. If $C_1(R)$ is the event that there is an open path in $G(R)$ that connects the right side $R_2$ to the ribbon $R$, then the same argument as before gives
\begin{equation}\label{eq: pizzapie}
\mathbb{P}_p(C_1(R)) \geq 1-\left[ 1-\mathbb{P}_p(E(n,n)) \right]^{1/2} \to 1 \text{ as } n \to \infty\ .
\end{equation}
We now split the ribbon $R$ into an upper and lower piece:
\[
R_L = R \cap [n/2,n] \times [0,n/4] \times \{0,1\}
\]
\[
R_U = R \cap [n/2,n] \times [n/4,n/2] \times \{0,1\}\ .
\]
Write $C_1(R_U)$ for the event that there is an open path in $G(R)$ that connects the right side $R_2$ to $R_U$ and let $C_1(R_L)$ be the corresponding event for $R_L$. By the FKG inequality,
\[
\mathbb{P}_p(C_1(R)^c) \geq \mathbb{P}_p(C_1(R_U)^c) \mathbb{P}_p(C_1(R_L)^c)\ .
\]
Combining this with \eqref{eq: pizzapie} gives
\[
\mathbb{P}_p(C_1(R_U)^c) \mathbb{P}_p(C_1(R_L)^c) \leq \left[ 1-\mathbb{P}_p(E(n,n)) \right]^{1/2}
\]
and thus
\begin{equation}\label{eq: pizzapie2}
\max\left\{ \mathbb{P}_p(C_1(R_U)),\mathbb{P}_p(C_1(R_L)) \right\} \geq 1- \left[ 1-\mathbb{P}_p(E(n,n))\right]^{1/4}\ .
\end{equation}

Let us suppose that the maximum in \eqref{eq: pizzapie2} is $\mathbb{P}_p(C_1(R_L))$; if this is not the case then we modify the following argument by replacing the lowest open crossing ribbon from $R_2$ to $R_L$ by the highest one from $R_2$ to $R_U$. We write $C_1(R_L)$ as a disjoint union of the events $D(R,U)$ as before, but only using ribbons $U$ in $G(R)$ that connect the right side $R_2$ to the piece of the ribbon $R_L$:
\[
\mathbb{P}_p(C_1(R_L)) = \sum_U \mathbb{P}_p(D(R,U))\ .
\]
Depending on $R$ and $U$ we choose a deterministic vertex $v$ within distance 2 of both $R$ and $U$. Let $Ann_1, \ldots, Ann_k$ be the $k=(1/2) \lfloor \log n \rfloor$ annuli centered at $v$:
\[
Ann_j = B(v;2^{j+1}) \setminus B(v;2^j)\ ,
\]
where $B(v;l)$ is the box $[-l,l]^2 \times \{0,1\}$ translated to be centered at $v$. Let $\widetilde G(R,U)$ be the region of $S_2$ above $U$ and to the right of $R \cup \widetilde R$. Write $H_j(R,U)$ for the event that there is an open path in $Ann_j \cap \widetilde G(R,U)$ that contains vertices $(x_1,y_1,0)$ and $(x_1,y_1,1)$ adjacent to $R$ and vertices $(x_2,y_2,0)$ and $(x_2,y_2,1)$ adjacent to $U$. We claim that there exists $c>0$ such that for all $j$, $n$ and choices of $R$ and $U$,
\begin{equation}\label{eq: pizzapie3}
\mathbb{P}_p(H_j(R,U)) \geq c\ .
\end{equation}
Assuming this for the moment, we note that for any $j$, $H_j(R,U) \cap D(R,U) \cap D(R)$ implies the event $F_n$. Therefore
\begin{eqnarray*}
\mathbb{P}_p(F_n) &\geq& \sum_{R,U} \mathbb{P}_p(D(R), D(R,U), F_n) \\
&\geq& \sum_{R,U} \mathbb{P}_p(D(R),D(R,U),H_j(R,U) \text{ occurs for some } j) \\
&\geq& \left[ 1- c^{\frac{1}{4}\log n} \right] \sum_{R,U} \mathbb{P}_p(D(R),D(R,U)) \\
&=& \left[ 1-c^{\frac{1}{4}\log n} \right] \sum_R \mathbb{P}_p(D(R)) \mathbb{P}_p(C_1(R_L)) \\
&\geq& \left[ 1-c^{\frac{1}{4} \log n} \right] \left[ 1-\left[1- \mathbb{P}_p(E(n,n)) \right]^{1/4} \right] \mathbb{P}_p(E(n,n))\ ,
\end{eqnarray*}
where the third inequality uses independence of the $H_j$'s and the equality uses independence of $D(R)$ and $D(R,U)$. The last quantity converges to 1 as $n \to \infty$. This completes the proof, with the exception of justifying \eqref{eq: pizzapie3}.
\end{proof}

\begin{proof}[Proof of \eqref{eq: pizzapie3}]
Any path in $Ann_j \cap \widetilde G(R,U)$ that contains a vertex adjacent to $R$ and one adjacent to $U$ will be called a crossing quarter-circuit. Just as in the case of left-most open crossings (or crossing ribbons) of a box, we can define the unique innermost open crossing quarter-circuit ribbon. We write $I(R,U,W)$ for the event that $W$ is the innermost such ribbon in $Ann_j \cap \widetilde G(R,U)$. Note as before that for distinct $W_1$ and $W_2$, the events $I(R,U,W_1)$ and $I(R,U,W_2)$ are disjoint. Furthermore the event $I(R,U,W)$ depends only on the state of vertices on $W$ or in its ``interior.'' On this event we may identify (deterministic) paths $\mathcal{P}_1(R,U,W)$ and $\mathcal{P}_2(R,U,W)$ in the ``exterior'' of $W$ such that the first path contains a vertex adjacent to $W$ and one adjacent to $R$, the second path contains a vertex adjacent to $W$ and one adjacent to $U$, and both paths are of length at most 2. Let $J(R,U,W)$ be the event that the ribbons associated to these two paths contain all open sites. 

Note that $J(R,U,W)$ implies the event $H_j(R,U)$. Therefore
\begin{eqnarray*}
\mathbb{P}_p(H_j(R,U)) &=& \sum_W \mathbb{P}_p(I(R,U,W)) \\
&\geq& \sum_W \mathbb{P}_p(I(R,U,W),J(R,U,W)) \\
&\geq& p^8 \sum_W \mathbb{P}_p(I(R,U,W)) \\
&\geq& p^8 \mathbb{P}_p(C_j)\ ,
\end{eqnarray*}
where $C_j$ is the event that there is an open circuit around $v$ in the annulus $Ann_j$. But this probability is bounded away from 0 by Corollary~\ref{cor:  pizzapizzapie}. This completes the proof.
\end{proof}

\appendix
\section{Construction of a left-most crossing}

Let $B(n)$ be the box $[0,n]^2 \times \{0,1\}$ in $S$. We will prove here that one can extract from all top-down open crossing paths of $B(n)$ a unique ``left-most.'' Actually the ribbon of the path, not the path itself, will be unique.

If $P= (x_0, \ldots, x_k)$ is a path in $B(n)$ then recall that $P$ is a top-down crossing of $B(n)$ if $x_0$ is in the top side $[0,n] \times \{n\} \times \{0,1\}$, $x_k$ is in the bottom side $[0,n] \times \{0\} \times \{0,1\}$, and no other vertices of $P$ are in the top or bottom. Recall also the definition of the projection $\pi: S \to \mathbb{Z}^2$ given by
\[
\pi((x,y,z)) = (x,y)\ .
\]
We say that $P$ is \emph{self-avoiding} if $x_i = x_j$ implies $i=j$ and \emph{strongly self-avoiding} if $\pi(x_i) = \pi(x_j)$ implies $|i-j|\leq 1$. Note that if $P$ is strongly self-avoiding it is also self-avoiding. 

An important consequence of the thin sandwich property is the following. 
\begin{lem}
If there is an open top-down crossing of $B(n)$ in $S$ (in a configuration $\omega$) then there is also a strongly self-avoiding open top-down crossing of $B(n)$.
\end{lem}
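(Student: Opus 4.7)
The plan is to take, among all open top-down crossings of $B(n)$ in the configuration $\omega$, one of minimum length. Such a minimum exists because by hypothesis the collection of open top-down crossings is nonempty and path lengths are positive integers. Call the minimizer $P = (x_0, \ldots, x_k)$. I claim $P$ is automatically strongly self-avoiding, and the argument is a short contradiction based on the thin sandwich property.

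Suppose otherwise: $\pi(x_i) = \pi(x_j)$ for some $i < j$ with $j - i \ge 2$. There are two cases. If $x_i = x_j$, standard loop-erasure -- replacing the segment $(x_i, x_{i+1}, \ldots, x_j)$ by the single vertex $x_i$ -- yields a strictly shorter open path from $x_0$ to $x_k$, since $x_j = x_i$ is adjacent to $x_{j+1}$. If $x_i \ne x_j$, then because $S$ has thickness $2$ and $\pi(x_i) = \pi(x_j)$, the vertices $x_i$ and $x_j$ are precisely the two sites $(a,b,0)$ and $(a,b,1)$ above the common projection, so by the thin sandwich property they are adjacent. Replacing $(x_i, x_{i+1}, \ldots, x_j)$ by the direct edge $(x_i, x_j)$ yields a valid open path of length $k - (j - i - 1) < k$, since $j - i \ge 2$.

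The only subtlety is verifying that the shortcut preserves the top-down crossing property, i.e., that no intermediate vertex of the new path lies on the top or bottom side of $B(n)$. Since the original $P$ has only $x_0$ on the top and only $x_k$ on the bottom, the potential issues occur only when $i = 0$ or $j = k$. A short case-check handles these: for instance, if $i = 0$ in the second case, then $\pi(x_0)$ lies on the top side of $B(n)$, so $x_j$ also lies on the top side; but $j \ne 0$ and (for $n \ge 1$) $j \ne k$, so $x_j$ would be an intermediate vertex on the top side, contradicting $P$ itself being a valid top-down crossing. The symmetric subcases are analogous. Hence in all situations we produce a strictly shorter open top-down crossing, contradicting the minimality of $P$, so $P$ is strongly self-avoiding.

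The step I expect to be the main obstacle is the bookkeeping in verifying that the shortcut remains a valid top-down crossing (i.e., the corner cases $i = 0$ or $j = k$); beyond that, the argument is essentially a direct translation of the planar loop-erasure argument, with the thin sandwich property supplying the single new ingredient -- it converts a mere projection collision into an actual adjacency in $S$, which is exactly what makes the shortcut a legal step in the graph.
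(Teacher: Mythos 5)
Your proof is correct and uses essentially the same loop-erasure idea as the paper's own argument; the paper iteratively removes loops until termination (arguing termination since each step strictly shortens the path), while you instead take a minimum-length open crossing and show any loop would contradict minimality. These are two standard packagings of the same argument, with your extremal formulation being slightly more compact.
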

\begin{proof}
Take any top-down open crossing path $P$ of $B(n)$. We apply the following procedure to remove ``loops.'' Write the vertices of $P$ in order as a sequence $(x_0,x_2, \ldots, x_k)$. If there exists a pair of vertices $x_i,x_j \in P$ with $i<j$ such that $x_i = x_j$ then we remove the segment $x_i, \ldots, x_{j-1}$ from $P$. If instead there exists a pair $x_i,x_j \in P$ with $i<j-1$ and $\pi(x_i) = \pi(x_j)$ but $x_i \neq x_j$ then we remove the segment $x_{i+1}, \ldots, x_{j-1}$ from $P$. By doing this, we produce a new sequence of vertices $P_1$. Repeat the procedure to $P_1$ and continue until there are no longer any of the two types of loops described above.

Note that this procedure must eventually terminate in some sequence $P'$ because at each step we remove at least one vertex from the sequence. We claim that $P'$ is a strongly self-avoiding open top-down crossing of $B(n)$. First, it is a path because we have only removed connected portions $x_k, \ldots, x_l$ such that $x_{k-1}$ and $x_{l+1}$ are neighbors in $S$. Indeed, in the first case above (where $x_i=x_j$ for some $i<j$) then $x_{i-1}$ is a neighbor of $x_j$ if $i>2$ and this is irrelevant when $i=1$. In the second case, $x_i$ and $x_j$ have the same projection and so by the thin sandwich property they are connected. Next, $P'$ still touches the top and bottom sides of the box because we never remove the first and last vertices of $P$. Last $P'$ is by construction strongly self-avoiding: at the end there are no loops to remove.
\end{proof}

We now work with only strongly self avoiding open top-down crossings of $B(n)$. For any such path $P$ we associate to it the subset of $\pi(B(n))$ ``to the left'' of its projection in the following manner. We first project $P$ to a sequence in $\pi(B(n))$:
\[
\pi(P) = (\pi(x_0), \ldots, \pi(x_k))\ .
\]
Note that $\pi(P)$ need not be a path in $\pi(B(n))$ since we may have $\pi(x_i) = \pi(x_{i+1})$ for some $i$. However, after removing duplicate vertices (which must be consecutive) in this sequence we end up with a path $\tilde{\pi}(P)$ which is self-avoiding in $\pi(B(n))$. Further, this path is a self-avoiding top-down crossing of $\pi(B(n))$; that is, its initial vertex is on the top side of $\pi(B(n))$ and its final vertex is on the bottom, with no other of its vertices on the top or bottom. Write $\mathbf{C}(n)$ for the set of all such paths.

For any $\tilde P \in \mathbf{C}(n)$ we consider the continuous path in $\mathbb{R}^2$ obtained by connecting consecutive vertices with line segments. Writing the vertices of the path as $(y_0, \ldots, y_m)$, we then connect the set $[0,n] \times \{n+1/2\}$ to $y_0$ by a vertical line segment and the set $[0,n] \times \{-1/2\}$ to $y_m$ with another vertical line segment. By concatenating this curve with the portion of the boundary $\partial [-1/2,n+1/2]^2$ containing the left side $\{-1/2\} \times [-1/2,n+1/2]$ between the two endpoints of the curve, we obtain a Jordan curve $C_{\tilde P}$; that is, a closed continuous curve with no self-intersections (we will not distinguish between the curve and its trace). By the Jordan curve theorem, the set $\mathbb{R}^2 \setminus C_{\tilde P}$ is composed of two connected components: one bounded and one unbounded. Write $L(\tilde P)$ for the bounded one.

Last, we define the left-most crossing event. Given any $\tilde P \in \mathbf{C}(n)$, write $D(\tilde P)$ for the event that the following two conditions hold.
\begin{enumerate}
\item There is a strongly self-avoiding open top-down crossing $P$ of $B(n)$ such that $\tilde{\pi}(P) = \tilde P$.
\item There is no strongly self-avoiding open top-down crossing $Q$ of $B(n)$ such that the left region $L(\tilde{\pi}(Q))$ is strictly contained in $L(\tilde P)$.
\end{enumerate}

\begin{prop}
The following statements hold.
\begin{enumerate}
\item The event $\mathcal{A}_n$ that there is an open top-down crossing of $B(n)$ is the disjoint union $\cup_{\tilde P \in \mathbf{C}(n)} D(\tilde P)$.
\item For each $\tilde P \in \mathbf{C}(n)$, the event $D(\tilde P)$ depends only on the state of vertices in the region
\[
X(\tilde P) = \pi^{-1} \left( \{v : v \text{ is a vertex of } \tilde P\} \cup \{v : v \in L(\tilde P)\}\right)\ .
\]
($X(\tilde P)$ is the set of vertices which project to $\tilde P$ or to the left of $\tilde P$.)
\end{enumerate}
\end{prop}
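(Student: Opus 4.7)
The plan is to prove the two items separately. Throughout I work with a fixed configuration $\omega$ and use that $B(n)$ is finite, so only finitely many strongly self-avoiding top-down crossings of $B(n)$ exist.

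For item (1), existence is straightforward. If $\mathcal{A}_n$ occurs then the preceding lemma furnishes at least one strongly self-avoiding open top-down crossing, so the finite collection of left regions $L(\tilde{\pi}(P))$ arising from such crossings admits an element that is minimal under set inclusion. That minimal element is $L(\tilde P)$ for some $\tilde P \in \mathbf{C}(n)$, and $D(\tilde P)$ then occurs by construction. The reverse inclusion $\cup_{\tilde P \in \mathbf{C}(n)} D(\tilde P) \subseteq \mathcal{A}_n$ is immediate from condition (i) in the definition of $D(\tilde P)$.

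The real work, and what I expect to be the main obstacle, lies in disjointness. Suppose toward a contradiction that $D(\tilde P_1)$ and $D(\tilde P_2)$ both occur with $\tilde P_1 \neq \tilde P_2$, and pick strongly self-avoiding open witnesses $P_1, P_2$ with $\tilde\pi(P_i) = \tilde P_i$. I would construct a strongly self-avoiding open top-down crossing $Q$ with $L(\tilde{\pi}(Q)) = L(\tilde P_1) \cap L(\tilde P_2)$; since $\tilde P_1 \neq \tilde P_2$ forces $L(\tilde P_1) \neq L(\tilde P_2)$, this intersection is strictly contained in at least one of $L(\tilde P_1), L(\tilde P_2)$, contradicting condition (ii) for the corresponding $\tilde P_i$. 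The candidate planar path $\tilde Q$ is the top-down crossing of $\pi(B(n))$ that bounds $L(\tilde P_1) \cap L(\tilde P_2)$ on its right; combinatorially it alternates between subarcs of $\tilde P_1$ and $\tilde P_2$, switching only at vertices common to both projections. To lift $\tilde Q$ to an open path in $S$ I follow vertices of $P_i$ on each subarc inherited from $\tilde P_i$. At every switch vertex $v \in \mathbb{Z}^2$, both $P_1$ and $P_2$ have lifts over $v$; these lifts have $z$-coordinates in $\{0,1\}$ and are open, so by the thin sandwich property they are identical or adjacent, which allows the splice. The resulting open walk may contain loops, but the loop-removal procedure of the previous lemma produces a strongly self-avoiding open crossing whose projection is exactly $\tilde Q$. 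The delicate technical point is to check that all loops removed in the process have projections contained in the switch-point fibers, so that the cleaned projection equals $\tilde Q$ rather than some proper sub-crossing.

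For item (2) I would check the two defining clauses of $D(\tilde P)$ separately. Clause (i) concerns only vertices of $S$ whose $\pi$-image lies on $\tilde P$, and these lie in $X(\tilde P)$ by definition. For clause (ii), suppose $Q$ is any strongly self-avoiding open top-down crossing with $L(\tilde{\pi}(Q)) \subsetneq L(\tilde P)$. Taking closures in the Jordan curve theorem gives $\overline{L(\tilde{\pi}(Q))} \subseteq \overline{L(\tilde P)}$, so the trace of $\tilde{\pi}(Q)$ lies in $\overline{L(\tilde P)}$. Every lattice point of $\overline{L(\tilde P)}$ is either in $L(\tilde P)$ or is a vertex of $\tilde P$, so every vertex of $Q$ projects into the set defining $X(\tilde P)$ and hence lies in $X(\tilde P)$. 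Therefore clause (ii) is decided by the restriction of $\omega$ to $X(\tilde P)$, completing the argument.
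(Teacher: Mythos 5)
Your treatment of item (2) matches the paper's (the paper just says it is "obvious," and your elaboration via $\overline{L(\tilde{\pi}(Q))} \subseteq \overline{L(\tilde P)}$ is the right reason). Your existence argument for item (1) — pick a minimal element of the finite poset of left regions — is a slightly cleaner packaging than the paper's iterative area-decreasing procedure, but it is the same idea. The problem is the disjointness step, which is the technical heart of the proposition and where you depart from the paper.

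You propose to produce a strongly self-avoiding open crossing whose left region \emph{equals} $L(\tilde P_1) \cap L(\tilde P_2)$, by taking the top-down path $\tilde Q$ that bounds this intersection on its right and then lifting it. The gap is that $L(\tilde P_1) \cap L(\tilde P_2)$ need not be a Jordan domain, nor even connected, and its right boundary need not be a single top-down crossing of $\pi(B(n))$. Concretely, take $\tilde P_1 = (1,n),\ldots,(1,k),(2,k),(2,k-1),\ldots,(2,0)$ and
\[
\tilde P_2 = (0,n),\ldots,(0,k{+}2),(1,k{+}2),(2,k{+}2),(2,k{+}1),(2,k),(2,k{-}1),(1,k{-}1),(1,k),(0,k),(0,k{-}1),\ldots,(0,0)\ .
\]
Both are self-avoiding top-down crossings, and at $y = k - 1/2$ one checks $L(\tilde P_2) = (-1/2,0) \cup (1,2)$ while $L(\tilde P_1) = (-1/2,2)$, so the intersection has the extra pocket $(1,2) \times (k-1,k)$, which is cut off from the left strip on all sides; the two curves also pinch at $(1,k)$. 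So the object $\tilde Q$ you would like to lift does not exist as a single self-avoiding crossing, and the subsequent lifting and loop-removal discussion (which you correctly flag as delicate) is moot because there is nothing to lift.

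The paper's proof avoids this entirely: it locates just \emph{one} connected arc $C_p$ of $C_{\tilde P_1}$ lying inside the open set $L(\tilde P_2)$, and splices only that arc into a lift of $\tilde P_2$, replacing the corresponding piece of $P_2$. Because $C_p$ lies in the open region, its interior projections are off $\tilde P_2$, and because the endpoints $z_1 \neq z_2$ are distinct points of the simple curve $C_{\tilde P_2}$, the spliced path is automatically strongly self-avoiding with no loop removal needed; it also visibly has strictly smaller left region than $L(\tilde P_2)$. The paper also separately treats the boundary cases where $z_1$ or $z_2$ lies on the top or bottom of $\pi(B(n))$, which your sketch does not address. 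To repair your argument you would need either to restrict to a single boundary arc as the paper does, or to prove that some connected component of $L(\tilde P_1) \cap L(\tilde P_2)$ containing the left strip is a Jordan domain whose right boundary is a top-down crossing — which is essentially reinventing the paper's surgical splice with extra overhead.
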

\begin{proof}
The second statement of the proposition is obvious from the definition of $D(\tilde P)$, because both $\tilde P$ and any such $Q$ must have all vertices in $X(\tilde P)$, so we prove the first statement. We begin by showing that for distinct $\tilde P$ and $\tilde Q$ in $\mathbf{C}(n)$ the two events $D(\tilde P)$ and $D(\tilde Q)$ cannot hold simultaneously. For a contradiction, assume that they do both hold. The map $\tilde R \mapsto C_{\tilde R}$ is injective, so since $\tilde P \neq \tilde Q$, the curves $C_{\tilde P}$ and $C_{\tilde Q}$ are not equal. Because both curves contain the left boundary $\{-1/2\} \times [-1/2,n+1/2]$, we may start at $(-1/2,-1/2)$, proceeding upward along this boundary, always keeping the regions $L(\tilde Q)$ and $L(\tilde P)$ on our right (here we move ``clockwise''). Continue in this manner until the first point where the curves separate; at this point we have two ways in which to move -- one segment (associated to, say, $C_{\tilde P}$) will move to the ``right'' of the other segment (associated to $C_{\tilde Q}$). In other words, the segment of $C_{\tilde P}$ will be in the region $L_{\tilde Q}$. Thus without loss in generality, there exists a point $p \in C_{\tilde P} \cap L(\tilde Q)$.

Write $\mathbf{D}(n)$ for the set of strongly self-avoiding top-down crossings of $B(n)$ (not necessarily open). Now on the event $D(\tilde P) \cap D(\tilde Q)$, given a point $p \in C_{\tilde P} \cap L(\tilde Q)$, we show how to produce an open path $T \in \mathbf{D}(n)$ such that $L(\tilde{\pi}(T))$ is strictly contained in $L(\tilde Q)$, contradicting the occurrence of the event $D(\tilde Q)$. The point $p$ is in a connected component $C_p$ of $C_{\tilde P} \cap L(\tilde Q)$; this component is just a segment of $C_{\tilde P}$. Order the endpoints of $C_p$ as $z_1$ and $z_2$ by the order in which we meet them while traversing $C_{\tilde Q}$ clockwise. We construct the path $T$ differently depending on the positions of the $z_i$'s. For the construction, let $P,Q$ be open paths in $\mathbf{D}(n)$ such that $\tilde P = \tilde{\pi}(P)$ and $\tilde Q = \tilde{\pi}(Q)$.

First assume that $z_1$ and $z_2$ are contained in $\pi(B(n))$. Then the segment $C_p$ is the projection of some segment $P_p$ of $P$ in $B(n)$ and $z_1$ and $z_2$ are projections of vertices $\hat z_1$ and $\hat z_2$ in $P$. Note that since $Q$ has a first vertex $w_1$ with projection equal to $z_1$ (and last vertex $w_2$ with projection equal to $z_2$), $Q$ is connected to $P_p$. We build $T$ by following $Q$ from the top of $B(n)$ until $w_1$, moving to $\hat z_1$ (if $\hat z_1 \neq w_1$ -- otherwise we do nothing), traversing $P_p$ to $\hat z_2$, then switching back to $Q$ at $w_2$ (if $\hat z_2 \neq w_2$), following it down to the bottom of $B(n)$. Note that since $Q$ is strongly self-avoiding, $z_1 \neq z_2$, so $T$ is also strongly self-avoiding. Because $\tilde{\pi}(T)$ follows $\tilde Q$, only deviating into $L(\tilde Q)$, the region $L(\tilde{\pi}(T))$ is strictly contained in $L(\tilde Q)$ and we have a contradiction.

If either of $z_1$ or $z_2$ is not contained in $\pi(B(n))$ it must be in either $[0,n] \times \{n+1/2\}$ or $[0,n] \times \{-1/2\}$. In this case, the path $P$ either starts or ends (or both) ``to the left'' of $Q$. We modify the above procedure in the obvious way: if $z_1$ is not contained in $\pi(B(n))$ then we build $T$ by starting at the first vertex of $P$, traversing the segment $P_p$ until we reach $Q$ at $\hat z_2$ (if this does not happen then we traverse the whole path $P$ down to the bottom side of $B(n)$, giving that $L(\tilde P)$ is contained in $L(\tilde Q)$, a contradiction) and then switching back to $Q$. In this manner we find an open $T\in \mathbf{D}(n)$ such that $L(\tilde{\pi}(T))$ is strictly contained in $L(\tilde Q)$, a contradiction.

We next show that if the event $\mathcal{A}_n$ occurs then $D(\tilde P)$ must occur for some $\tilde P \in \mathbf{C}(n)$. To do this, we note that the procedure outlined above actually gives a way of constructing an open $T\in \mathbf{D}(n)$ such that $L(\tilde{\pi}(T))$ is strictly contained in $L(\tilde{\pi} (P))$ whenever $P$ is an open path in $\mathbf{D}(n)$ such that there is another open path $Q$ in $\mathbf{D}(n)$ with $\tilde{\pi}(Q)$ intersecting $L(\tilde{\pi}(P))$. Further, the region $L(\tilde{\pi}(T))$ has area no bigger than that of $L(\tilde{\pi}(P))$ minus one. Thus, when $\mathcal{A}_n$ occurs we may start with any open path in $\mathbf{D}(n)$ and apply this procedure (necessarily a finite number of times) until we get a path $P$ such that there is no open $T \in \mathbf{D}(n)$ with $L(\tilde{\pi}(T))$ strictly contained in $L(\tilde{\pi}(P))$. At this point, the event $D(\tilde{\pi}(P))$ occurs. This proves the first item of the proposition.
\end{proof}

Finally, we restate the proposition above in terms of ribbons. It follows immediately from the fact that paths with distinct ribbons must have distinct projections. Given the ribbon $R = R(P) = \{v \in B(n) : \pi(v) \in \pi(P)\}$ of a self-avoiding top-down crossing $P$ of $B(n)$ we write $D(R)$ for the event 
\[
D(R) = \cup_Q D(\tilde{\pi} (Q))\ ,
\]
where the union is over all self-avoiding top-down crossings $Q$ of $B(n)$ with $R(Q) = R$. Let $L(R)$ be the union of all regions $\pi^{-1}\left(L(\tilde{\pi} (Q))\right)$ for all such $Q$.
\begin{prop}\label{prop: ribbon_leftmost}
The following statements hold.
\begin{enumerate}
\item The event $\mathcal{A}_n$ that there is an open top-down crossing of $B(n)$ is the disjoint union $\cup_R D(R)$ over all ribbons of self-avoiding top-down crossings of $B(n)$.
\item For each such $R$, the event $D(R)$ depends only on the state of vertices in the region
\[
\overline{L}(R) = R \cup L(R)\ .
\]
\end{enumerate}
\end{prop}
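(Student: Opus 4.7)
My plan is to deduce Proposition~\ref{prop: ribbon_leftmost} directly from the preceding proposition by regrouping the disjoint union indexed by projected paths $\tilde P \in \mathbf{C}(n)$ according to ribbons. The key observation I would rely on is that each $\tilde P \in \mathbf{C}(n)$ determines a unique ribbon $R(\tilde P) = \pi^{-1}(\{\text{vertices of }\tilde P\}) \cap B(n)$, whereas several distinct $\tilde P$'s may share the same ribbon (a given projected vertex set can sometimes admit more than one top-to-bottom traversal). Consequently $\mathbf{C}(n)$ partitions into equivalence classes indexed by ribbons $R$, and by the definition given just before the proposition, $D(R)$ is precisely the union of $D(\tilde{\pi}(Q))$ over all self-avoiding top-down crossings $Q$ whose ribbon is $R$, equivalently over all $\tilde P$ in the equivalence class of $R$.

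For the first statement I would simply rewrite
\[
\mathcal{A}_n \;=\; \bigsqcup_{\tilde P \in \mathbf{C}(n)} D(\tilde P) \;=\; \bigsqcup_R \; \bigcup_{\tilde P : \, R(\tilde P) = R} D(\tilde P) \;=\; \bigsqcup_R D(R),
\]
using the previous proposition for the first equality and the partition described above for the regrouping. Disjointness of $D(R)$ and $D(R')$ for $R \neq R'$ is inherited from the pairwise disjointness of the underlying $D(\tilde P)$'s, since distinct ribbons correspond to disjoint families of $\tilde P$'s.

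For the second statement, I would fix $R$ and any self-avoiding top-down crossing $Q$ with $R(Q) = R$. Then $\pi^{-1}(\{\text{vertices of }\tilde{\pi}(Q)\}) \cap B(n) = R$ directly from the definition of the ribbon, and $\pi^{-1}(L(\tilde{\pi}(Q))) \subseteq L(R)$ because $L(R)$ is defined as the union of exactly these sets over all $Q$ with $R(Q) = R$. Hence $X(\tilde{\pi}(Q)) \subseteq \overline L(R)$, and by the previous proposition $D(\tilde{\pi}(Q))$ depends only on vertices in $X(\tilde{\pi}(Q))$, hence only on vertices in $\overline L(R)$. Taking the union over all $Q$'s in the equivalence class of $R$ preserves this dependence, so $D(R)$ itself depends only on the state of vertices in $\overline L(R)$.

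I do not foresee any real obstacle here: the proposition is essentially a bookkeeping rephrasing of the previous one, translating from projected paths to their ribbons, and the author's remark that ``paths with distinct ribbons must have distinct projections'' points to exactly the correspondence that makes the regrouping legitimate. The only mild care needed is in verifying that $L(R)$, defined as a union, absorbs each individual $\pi^{-1}(L(\tilde{\pi}(Q)))$ so that the locality statement survives the union.
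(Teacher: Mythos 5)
Your proposal is correct and is essentially a spelled-out version of the paper's one-line justification (``it follows immediately from the fact that paths with distinct ribbons must have distinct projections''): you regroup the disjoint union over $\tilde P \in \mathbf{C}(n)$ by equivalence classes of ribbons and observe that each $X(\tilde\pi(Q))$ sits inside $\overline L(R)$. Nothing is missing; this is the intended bookkeeping argument.
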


\bigskip
\noindent
{\bf Acknowledgements.} M. D. thanks C. Newman and the Courant Institute for funds and support while some of this work was done.

\end{document}